\theoremstyle{plain}
\newtheorem{thm}{Theorem}[section]
\newtheorem{cor}[thm]{Corollary}
\newtheorem{prop}[thm]{Proposition}
\theoremstyle{definition}
\def\sg{\sigma}
\def\lm{\lambda}
\def\dim{\mbox{\rm dim }}
\def\l.l.o.{\it l.l.o}
\def\f{\varphi}
\def\C{{\cal C}}
\def\chiup{\raise 2pt\hbox{$\chi$}}
\title{Lie superalgebras and some characters of $S_n$}
\author{Amitai Regev\\
Department of Mathematics,\\
The Weizmann Institute of science,\\
Rehovot, Israel\\
e-mail: amitai.regev at weizmann.ac.il\\
}
\begin{document}

\maketitle

{\bf Abstract}. We prove a formula for $S_n$ characters which are indexed by the partitions in the
$(k,\ell)$ hook. The proof applies a combinatorial part of the theory of Lie superalgebras~\cite{berele.regev}.

\medskip
2010 Mathematics Subject Classification: 20C30
\section{Introduction}

$S_n$ is the $n$-th symmetric group. The irreducible $S_n$ characters are denoted $\chi^\lm$, where
$\lm$ is a partition of $n$, denoted $\lm\vdash n$.
The number of non-zero parts of a partition $\lm$ is denoted $\ell(\lm)$. The conjugacy class of $S_n$ corresponding to
a partition $\mu\vdash n$
-- via disjoint cycle decomposition --
is denoted $\C_\mu\subseteq S_n$. Since characters are constant on conjugacy classes,
if $\sg\in\C_\mu$ we write $\chi^\lm(\sg)=\chi^\lm(\mu)$.

\subsection{Main results}
Studying character tables of $S_n$, one observes the following intriguing phenomena.

\begin{prop}\label{main.1}
Let $\mu=(\mu_1,\ldots,\mu_r)\vdash n$ where $\mu_r>0$, and
let $\chi_n$ be the following $S_n$-character:
$$
\chi_n=\sum_{i=0}^{n-1} \chi^{(n-i,1^i)}.
$$

\medskip
1. ~If some $\mu_j$ is even then $\chi_n(\mu)=0$.

\medskip
2. ~If all $\mu_j$s are odd then $\chi_n(\mu)=2^{\ell(\mu)-1}$.
\end{prop}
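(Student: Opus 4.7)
The plan is to recognize $\chi_n$ as the character of a transparent representation. For each hook partition $(n-i,1^i)$, the Specht module $S^{(n-i,1^i)}$ is isomorphic to the $i$-th exterior power $\wedge^{i} V$ of the standard $(n-1)$-dimensional irreducible representation $V$ of $S_n$ (a classical fact, derivable from Young's rule or the Pieri formula, or by matching dimensions via the hook length formula and verifying characters). Granting this,
\[
\chi_n(\sg) \;=\; \sum_{i=0}^{n-1}\tr\bigl(\sg|_{\wedge^{i} V}\bigr) \;=\; \det\bigl(I+\sg|_V\bigr) \;=\; \prod_{\alpha}(1+\alpha),
\]
where $\alpha$ runs over the eigenvalues (with multiplicity) of $\sg$ acting on $V$. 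The whole problem then reduces to diagonalizing $\sg|_V$ and multiplying out.

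Next, I read off the eigenvalues from the cycle type. If $\sg\in\C_\mu$ has cycle type $\mu=(\mu_1,\ldots,\mu_r)$, its action on the full permutation module $\CC^n$ splits cycle by cycle, and each $\mu_j$-cycle contributes the complete set of $\mu_j$-th roots of unity as eigenvalues. Because $V$ is $\CC^n$ with one trivial summand removed, on $V$ we see $\ell(\mu)-1$ copies of $1$ together with, for each $j$, the $\mu_j-1$ nontrivial $\mu_j$-th roots of unity. Hence
\[
\chi_n(\mu) \;=\; 2^{\ell(\mu)-1}\prod_{j=1}^{r}\,\prod_{\zeta^{\mu_j}=1,\,\zeta\ne 1}(1+\zeta).
\]

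Finally, each inner product is a one-line cyclotomic calculation: substituting $x=-1$ in $\prod_{\zeta^m=1}(x-\zeta)=x^m-1$ yields $\prod_{\zeta^m=1}(1+\zeta)=1-(-1)^m$, so after dividing out the $\zeta=1$ factor the product equals $0$ when $\mu_j$ is even and $1$ when $\mu_j$ is odd. Parts~1 and 2 of the proposition then follow at once. The only step requiring non-routine input is the identification $S^{(n-i,1^i)}\cong\wedge^{i} V$; that I expect to be the conceptual heart of the argument rather than a technical obstacle. A direct attack via Murnaghan--Nakayama would also work but would force one to sum border-strip tableau contributions over every hook shape and is noticeably more cumbersome than the eigenvalue calculation above.
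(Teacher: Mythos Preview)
Your argument is correct: the identification $S^{(n-i,1^i)}\cong\wedge^i V$ is classical, the determinant--exterior-power identity is standard linear algebra, and the cyclotomic evaluation is sound.

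However, your route is genuinely different from the paper's. The paper does \emph{not} use the exterior-power model of hook Specht modules at all. Instead it proves a more general identity (Theorem~\ref{identity.1}),
\[
\sum_{\lm\in H(k,\ell;n)}s_{k,\ell}(\lm)\,\chi^\lm(\mu)=\prod_{j=1}^r\bigl(k+(-1)^{\mu_j+1}\ell\bigr),
\]
by computing the trace of the super permutation action $\f^*_{(k,\ell),n}$ on $(V_0\oplus V_1)^{\otimes n}$, and then specializes to $k=\ell=1$, where $H(1,1;n)$ is exactly the set of hooks and $s_{1,1}(\lm)=2$ for every hook. What the paper's approach buys is the full $(k,\ell)$ family in one stroke, at the cost of invoking the Lie-superalgebra combinatorics of \cite{berele.regev}. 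What your approach buys is a self-contained $S_n$-representation-theoretic proof of the $k=\ell=1$ case---precisely the kind of argument the paper explicitly asks for in the introduction---though it does not visibly generalize to arbitrary $(k,\ell)$.
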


Proposition~\ref{main.1} is a special case of Theorem~\ref{identity.1} below, which
gives a more general  identity of $S_n$-characters.
We prove that general identity by applying some combinatorial parts of
the theory of Lie superalgebras~\cite{berele.regev}.

\medskip
It seems to be of some
 interest to find proofs of these results which only use $S_n$ character theory.

\subsection{Preliminaries}

\medskip
Given the integers $k,\ell\ge 0$, let $H(k,\ell;n)$ denote the partitions of $n$ in the
$(k,\ell)$-hook:
\begin{eqnarray}\label{hook.1}
H(k,\ell;n)=\{\lm=(\lm_1,\lm_2,\ldots)\vdash n\mid \lm_{k+1}\le \ell\}.
\end{eqnarray}
Also let $H'(k,\ell;n)\subseteq H(k,\ell;n)$ denote the subset of the partitions
containing the $k\times\ell$ rectangle:
\begin{eqnarray}\label{hook.2}
H'(k,\ell;n)=\{\lm\in H(k,\ell;n)\mid \lm_k\ge\ell\}.
\end{eqnarray}
Recall that a tableau is called {\it semi-standard} if it is weakly increasing in rows and
strictly in columns; if its entries are from the set $\{1,2,\ldots,k\}$ then it is called
a $k$ tableau. Of course a  $k$ tableau is also a $k+1$ tableau, etc. Let $\lm\vdash n$, then
$s_k(\lm)$ denotes the number of $k$ semi standard tableaux of shape $\lm$. For formulas
for $s_k(\lm)$, see for example~\cite{macdonald},~\cite{stanley}.
Generalization to $(k,\ell)$ semi standard tableaux is given in~\cite{berele.regev}. Here, the number of
$(k,\ell)$ semi standard tableaux of shape $\lm$ is denoted by $s_{k,\ell}(\lm)$. Formulas for
calculating $s_{k,\ell}(\lm)$ -- when $\lm\in H'(k,\ell;n)$ -- are given in~\cite[Section 6]{berele.regev}.

\medskip
We briefly review  the part of the theory of Lie super algebras needed here, see~\cite{berele.regev}.
We first review the relevant parts of the classical Schur-Weyl theory.

\subsubsection{The classical Schur-Weyl theory}

Let $\dim V=k$  and let $V^{\otimes n}=V\otimes\cdots \otimes V$
$n$ times.
Define $\f_{k,n}:FS_n\to End(V^{\otimes n})$ via linearity and
\begin{eqnarray}\label{left.action1}
\sg\in S_n, \qquad \f_{k,n}(\sg)(v_1\otimes\cdots \otimes v_n)=
v_{\sg^{-1}(1)}\otimes\cdots \otimes v_{\sg^{-1}(n)}.
\end{eqnarray}
Then $\f_{k,n}$ is an $S_n$-representation, and we let $\chi_{\f_{k,n}}$
denote its $S_n$-character.
Via the representation $\f_{k,n}$, $V^{\otimes n}$ is a left $FS_n$ module. The decomposition of
$V^{\otimes n}$ into irreducibles $FS_n$ modules implies the following classical theorem.

\begin{thm}\label{classical.1}
Recall that $s_k(\lm)$ is the number of $k$-semi-standard tableaux of shape $\lm$.
Then
$$
\chi_{\f_{k,n}}=\sum_{\lm\in H(k,0;n)}s_k(\lm)\cdot\chi^\lm.
$$
\end{thm}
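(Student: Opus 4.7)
The plan is to compute the character $\chi_{\f_{k,n}}$ directly on conjugacy classes of $S_n$, and then identify the result with the right-hand side by means of the Frobenius formula together with the principal specialization of Schur functions.

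First I fix a basis $v_1,\ldots,v_k$ of $V$, so that the monomial basis of $V^{\otimes n}$ is indexed by functions $f\colon\{1,\ldots,n\}\to\{1,\ldots,k\}$, namely $e_f=v_{f(1)}\otimes\cdots\otimes v_{f(n)}$. By~\eqref{left.action1}, $\f_{k,n}(\sg)$ sends $e_f$ to $e_{f\circ\sg^{-1}}$, so $\tr\f_{k,n}(\sg)$ equals the number of $f$ satisfying $f=f\circ\sg^{-1}$; equivalently, the number of $f$ that are constant on every cycle of $\sg$. If $\sg\in\C_\mu$ there are $\ell(\mu)$ cycles, each contributing $k$ independent choices, and therefore
$$\chi_{\f_{k,n}}(\mu)=k^{\ell(\mu)}.$$

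Next, I invoke the Frobenius formula $p_\mu=\sum_{\lm\vdash n}\chi^\lm(\mu)\,s_\lm$, where $p_\mu$ and $s_\lm$ are the power-sum and Schur symmetric functions, and specialize the alphabet to $x_1=\cdots=x_k=1$. The left side becomes $p_\mu(1^k)=k^{\ell(\mu)}$, while on the right $s_\lm(1^k)=s_k(\lm)$ is the number of semi-standard $k$-tableaux of shape $\lm$; in particular $s_\lm(1^k)=0$ whenever $\ell(\lm)>k$. This yields
$$k^{\ell(\mu)}=\sum_{\lm\in H(k,0;n)} s_k(\lm)\,\chi^\lm(\mu),$$
and comparison with the previous paragraph shows that $\chi_{\f_{k,n}}$ and $\sum_{\lm\in H(k,0;n)}s_k(\lm)\chi^\lm$ agree on every conjugacy class $\C_\mu$, and so coincide as $S_n$-characters.

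The main content lies in the cycle-counting evaluation of the trace; the symmetric-function inputs (Frobenius's formula and the principal specialization identity $s_\lm(1^k)=s_k(\lm)$, together with its vanishing for $\ell(\lm)>k$) are classical and may be cited from~\cite{macdonald} or~\cite{stanley}. An alternative line of attack is Schur--Weyl duality: decompose $V^{\otimes n}$ as a $(\GL(V),S_n)$-bimodule and read off the $S_n$-multiplicities as the $\GL(V)$-dimensions, which are known to be $s_k(\lm)$. I prefer the Frobenius/specialization route here because it is shorter and entirely combinatorial, matching the spirit of the $(k,\ell)$-hook generalization to be developed in the rest of the paper.
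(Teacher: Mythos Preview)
Your proof is correct. The trace computation $\chi_{\f_{k,n}}(\mu)=k^{\ell(\mu)}$ is clean and matches the argument the paper itself gives (in supplementary material) for this classical fact; the Frobenius formula plus the principal specialization $s_\lm(1^k)=s_k(\lm)$ then does the job.

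However, the paper does not prove Theorem~\ref{classical.1} this way. It simply states that the result follows from the decomposition of $V^{\otimes n}$ into irreducible $FS_n$-modules---in other words, from classical Schur--Weyl duality, where the multiplicity of the Specht module indexed by $\lm$ is the dimension of the corresponding polynomial $\GL(V)$-module, namely $s_k(\lm)$. So the paper takes exactly the ``alternative line of attack'' you mention at the end and discard.

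What each approach buys: the paper's route is a one-line citation of a standard structural fact, appropriate since the theorem is only stated as background. Your route is more self-contained and, as you correctly anticipate, it mirrors the method the paper uses to prove the super analogue (Theorem~\ref{main.3}): there too the character is evaluated by a direct trace computation on a permutation with prescribed cycle type. In that sense your argument is a better rehearsal for what comes next, even though it is not how the paper handles this particular classical statement.
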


\subsubsection{The super analogue ~\cite{berele.regev}}

Here $V=V_0\oplus V_1$, $\dim V_0=k,$ $\dim V_1=\ell$, so $\dim V=k+\ell$. The super analogue of $\f_{k,n}$ is the map
$$
\f^*_{(k,\ell),n}:FS_n\to End((V_0\oplus V_1)^{\otimes n}),
$$
which is an $S_n$ representation,
 see~\cite{berele.regev} for details. Let $\sg\in S_n$. The main feature of $\f^*_{(k,\ell),n}(\sg)$ is, that when it
commutes an elements from $V_0$ with any element of $V$, this produces a plus sign; and when it
commutes two elements from $V_1$, this produces a minus sign.
We denote by $\chi_{\f^*_{(k,\ell),n}}$ the corresponding $S_n$ character.

\section{The main results}

Recall that
 $s_{k,\ell}(\lm)$ is the number of $k,\ell$ semi-standard tableaux
of shape $\lm$~\cite{berele.regev}. The following theorem, which is the super analogue of
Theorem~\ref{classical.1}, follows from the
analogue decomposition of $(V_0\oplus V_1)^{\otimes n}$ into irreducile left $FS_n$ modules~\cite{berele.regev}.
\begin{thm}\label{super.2}
$$
\chi_{\f^*_{(k,\ell),n}}=\sum_{\lm\in H(k,\ell;n)}s_{k,\ell}(\lm)\chi^\lm.
$$
\end{thm}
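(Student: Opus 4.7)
The plan is to mimic the proof of Theorem~\ref{classical.1} in the super setting, replacing the classical Schur--Weyl double centralizer by its super analogue from~\cite{berele.regev}. First I would recall that the action $\f^*_{(k,\ell),n}$ of $FS_n$ on $(V_0\oplus V_1)^{\otimes n}$ supercommutes with the natural left action of the general linear Lie superalgebra $\mathfrak{gl}(k|\ell)$ on the same tensor space; the sign rule described just before Theorem~\ref{super.2} is precisely what makes the two actions centralize each other, which is the content of the super Schur--Weyl theorem of~\cite{berele.regev}.

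Next I would invoke the resulting bimodule decomposition from~\cite{berele.regev}: as a left $FS_n$-module and right $\mathfrak{gl}(k|\ell)$-module,
\[
(V_0\oplus V_1)^{\otimes n} \;\cong\; \bigoplus_{\lm\in H(k,\ell;n)} W^\lm_{k,\ell}\otimes S^\lm,
\]
where $S^\lm$ denotes the Specht module of $S_n$ affording the character $\chi^\lm$, and $W^\lm_{k,\ell}$ is the irreducible $\mathfrak{gl}(k|\ell)$-module indexed by~$\lm$. Two structural inputs from~\cite{berele.regev} are essential here: (i) only partitions in the hook $H(k,\ell;n)$ contribute, because the Young symmetrizer $c_\lm$ annihilates $(V_0\oplus V_1)^{\otimes n}$ whenever $\lm_{k+1}>\ell$; and (ii) the vector space dimension of $W^\lm_{k,\ell}$ equals $s_{k,\ell}(\lm)$, by the combinatorial tableau model for the $(k,\ell)$-hook Schur functions.

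Finally, taking $S_n$-characters of both sides of the displayed isomorphism yields
\[
\chi_{\f^*_{(k,\ell),n}} \;=\; \sum_{\lm\in H(k,\ell;n)} \dim(W^\lm_{k,\ell})\,\chi^\lm \;=\; \sum_{\lm\in H(k,\ell;n)} s_{k,\ell}(\lm)\,\chi^\lm,
\]
which is exactly the asserted identity. The main obstacle is not this character count but the two structural inputs (i) and (ii): both are nontrivial theorems from~\cite{berele.regev}, the hook restriction being the genuinely new super-phenomenon relative to Theorem~\ref{classical.1}, and the dimension formula requiring the combinatorial identification of hook Schur functions with the generating series of $(k,\ell)$ semi-standard tableaux.
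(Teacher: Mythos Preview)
Your proposal is correct and follows essentially the same approach as the paper: the paper does not give a self-contained proof of Theorem~\ref{super.2} but simply states that it follows from the decomposition of $(V_0\oplus V_1)^{\otimes n}$ into irreducible $FS_n$-modules established in~\cite{berele.regev}. Your write-up spells out precisely what that citation entails---the super Schur--Weyl bimodule decomposition, the hook restriction $\lm\in H(k,\ell;n)$, and the identification of the multiplicity with $s_{k,\ell}(\lm)$---so it is a faithful (and more detailed) unpacking of the paper's one-line reference.
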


Given $\mu\vdash n,$
we now calculate $\chi_{\f^*_{(k,\ell),n}}(\mu)$ as the trace of the matrix of ${\f^*_{(k,\ell),n}}(\mu)$.
\begin{thm}\label{main.3}
Let $\mu=(\mu_1,\ldots,\mu_r)\vdash n$ with $\mu_r>0$, then
$$
\chi_{\f^*_{(k,\ell),n}}(\mu)=\prod_{j=1}^r(k+(-1)^{\mu_j+1}\ell).
$$

\end{thm}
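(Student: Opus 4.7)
The plan is to compute $\chi_{\f^*_{(k,\ell),n}}(\mu)$ directly as $\tr\bigl(\f^*_{(k,\ell),n}(\sig)\bigr)$ for some $\sig\in\C_\mu$. First I fix a homogeneous basis $e_1,\ldots,e_k$ of $V_0$ and $e_{k+1},\ldots,e_{k+\ell}$ of $V_1$, and consider the induced tensor basis $\{e_{i_1}\otimes\cdots\otimes e_{i_n}\}$ of $V^{\otimes n}$. Write $\sig$ as a product of disjoint cycles $\sig=\sig_1\cdots\sig_r$ of lengths $\mu_1,\ldots,\mu_r$. Because the operators $\f^*_{(k,\ell),n}(\sig_j)$ act on pairwise disjoint blocks of tensor positions, they commute; and a basis vector lies on the diagonal of $\f^*_{(k,\ell),n}(\sig)$ precisely when its multi-index $(i_1,\ldots,i_n)$ is constant on each cycle of $\sig$. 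Summing over the choices of these constants, the trace factors as
\[
\chi_{\f^*_{(k,\ell),n}}(\mu)=\prod_{j=1}^r T_{\mu_j},
\]
where $T_m$ denotes the trace of a single $m$-cycle acting on $V^{\otimes m}$ via $\f^*$.

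\medskip

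The heart of the argument is thus showing $T_m=k+(-1)^{m+1}\ell$. For $\sig=(1,2,\ldots,m)$, the non-super permutation of tensor factors is $v_1\otimes\cdots\otimes v_m\mapsto v_m\otimes v_1\otimes\cdots\otimes v_{m-1}$; by the Koszul rule (equivalently, by writing $\sig=(1,2)(2,3)\cdots(m-1,m)$ and applying the super swap $v\otimes w\mapsto (-1)^{|v||w|}\,w\otimes v$ at each step, with $|v|\in\{0,1\}$ the parity), the super action becomes
\[
\f^*_{(k,\ell),m}(\sig)(v_1\otimes\cdots\otimes v_m)=(-1)^{|v_m|(|v_1|+\cdots+|v_{m-1}|)}\,v_m\otimes v_1\otimes\cdots\otimes v_{m-1}.
\]
A tensor basis vector contributes to the diagonal only when $i_1=\cdots=i_m$, which gives $k+\ell$ such vectors. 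The $k$ constant vectors supported in $V_0$ contribute $+1$; each of the $\ell$ constant vectors supported in $V_1$ contributes $(-1)^{1\cdot(m-1)}=(-1)^{m-1}$. Hence $T_m=k+(-1)^{m-1}\ell=k+(-1)^{m+1}\ell$.

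\medskip

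Combining the two steps yields $\prod_{j=1}^r\bigl(k+(-1)^{\mu_j+1}\ell\bigr)$, as required. I expect the main obstacle to be the careful sign bookkeeping in the single-cycle calculation: one must unpack the definition of $\f^*_{(k,\ell),n}$ from~\cite{berele.regev} and verify that the accumulated Koszul signs really collapse as stated on constant multi-indices in $V_1^{\otimes m}$. Once this is settled, the cycle-by-cycle factorization and the sum over $k+\ell$ constant vectors make the rest of the argument routine.
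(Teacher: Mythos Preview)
Your proposal is correct and follows essentially the same approach as the paper: compute the trace of $\f^*_{(k,\ell),n}(\sig)$ in the tensor basis coming from a homogeneous basis of $V_0\oplus V_1$, observe that only basis vectors constant on each cycle contribute, and then count $k$ contributions of $+1$ and $\ell$ contributions of $(-1)^{\mu_j+1}$ per cycle. The only cosmetic difference is that you unpack the Koszul sign explicitly via the transposition decomposition of the cycle, whereas the paper simply identifies $(-1)^{\mu_j+1}$ as the sign of a $\mu_j$-cycle.
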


\begin{proof}
To calculate $\chi_{\f^*_{(k,\ell),n}}(\mu),$ let $\sg\in\C_\mu$ be the following permutation, given by its
disjoint cycle decomposition:
$$
\sg=(1,2,\ldots \mu_1)(\mu_1+1,\ldots,\mu_1+\mu_2)\cdots
$$
and we calculate $\chi_{\f^*_{(k,\ell),n}}(\sg).$ To do that, we choose a basis of $(V_0\oplus V_1)^{\otimes n}$,
calculate the matrix $M_\sg$ of $\f^*_{(k,\ell),n}(\sg)$ with respect to that basis, then calculate the trace
of $M_\sg$.

\medskip
So let $t_1,\ldots,t_k\in V_0$ and
$u_1,\ldots ,u_\ell\in V_1$ be bases, denote
$(t_1,\ldots,t_k,u_1,\ldots ,u_\ell)=(v_1,\ldots,v_{k+\ell})$, so $v_1,\ldots,v_{k+\ell}$ is
a (homogeneous) basis of $V$. Then
$$
\{v_{i_1}\otimes\cdots\otimes v_{i_n}\mid 1\le i_j\le k+\ell\}
$$
is a basis of $(V_0\oplus V_1)^{\otimes n}$. Consider the $j$-th cycle $(r,r+1,\ldots, s)$ in the disjoint cycle decomposition of $\sg$. It is of length $\mu_j=s-r+1$. Also consider a basis element $\bar v=v_{i_1}\otimes\cdots\otimes v_{i_n}$.
Corresponding to that cycle $(r,\ldots, s)$ we have the factor $v_{i_r}\otimes\cdots\otimes v_{i_s}$
of $\bar v$. If  $v_{i_r}\ne v_{i_q}$ for some $r+1\le q\le s$ then $\f^*_{(k,\ell),n}(\bar v)\ne \bar v$
hence $\bar v$ contributes $0$ to the trace of $M_\sg$. Hence we can assume that $v_{i_r}=\cdots = v_{i_s}$.

\medskip
There is the possibility that $v_{i_r}\in\{t_1,\ldots,t_k\}$ and
the possibility that $v_{i_r}\in\{u_1,\ldots,u_\ell\}.$
From the first possibility we get a contribution of $k$ to $tr(M_\sg)$
(since there are $k$ possible values $t_1,\ldots,t_k$ for $v_{i_r}$).
Similarly, from the second possibility we get a contribution of $(-1)^{\mu_j+1}\ell$ (the sign $(-1)^{\mu_j+1}$ is the sign of the cycle $(r,\ldots,s)$).
Together, the  $j$-th cycle $(r,\ldots, s)$
of $\sg$ contributes  to $tr(M_\sg)$ the factor $k+(-1)^{\mu_j+1}\ell$. This completes the proof.
\end{proof}
As an obvious consequence of Theorems~\ref{super.2} and~\ref{main.3} we have
\begin{thm}\label{identity.1}
Let $\mu=(\mu_1,\ldots,\mu_r)\vdash n$ with $\mu_r>0$, then
$$
\sum_{\lm\in H(k,\ell;n)}s_{k,\ell}(\lm)\cdot\chi^\lm(\mu)=\prod_{j=1}^r(k+(-1)^{\mu_j+1}\ell).
$$
\end{thm}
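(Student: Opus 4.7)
The plan is to obtain Theorem~\ref{identity.1} by equating two separate evaluations of the same class function, namely the super Schur--Weyl character $\chi_{\f^*_{(k,\ell),n}}$ at the conjugacy class $\C_\mu$. Theorem~\ref{super.2} already expresses this character as a nonnegative integer combination of irreducible $S_n$-characters indexed by $H(k,\ell;n)$, with multiplicities $s_{k,\ell}(\lm)$, while Theorem~\ref{main.3} gives the same character as the closed-form product $\prod_{j=1}^r(k+(-1)^{\mu_j+1}\ell)$. The theorem is simply the assertion that these two expressions agree when evaluated at $\mu$.

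First I would fix $\mu=(\mu_1,\ldots,\mu_r)\vdash n$ and choose any $\sg\in \C_\mu$ (for concreteness, the standard cycle representative used in the proof of Theorem~\ref{main.3}). Since Theorem~\ref{super.2} is an equality of class functions on $S_n$, I can evaluate it at $\sg$. Using the convention $\chi^\lm(\sg)=\chi^\lm(\mu)$ for any $\sg\in\C_\mu$, this yields
$$
\chi_{\f^*_{(k,\ell),n}}(\mu)=\sum_{\lm\in H(k,\ell;n)} s_{k,\ell}(\lm)\cdot\chi^\lm(\mu).
$$
Next I would substitute into the left-hand side the value provided by Theorem~\ref{main.3}, giving exactly the identity claimed. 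The sum on the right is finite because $H(k,\ell;n)$ is finite for fixed $n$, so no convergence issue arises.

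There is no real obstacle at this stage: all the substantive work has already been done, namely the super Schur--Weyl decomposition from~\cite{berele.regev} (which supplies Theorem~\ref{super.2}) and the direct trace computation over a homogeneous basis of $(V_0\oplus V_1)^{\otimes n}$ using the sign rule for commuting odd vectors (which supplies Theorem~\ref{main.3}). The present theorem only records the resulting numerical identity by equating the two independent computations of $\chi_{\f^*_{(k,\ell),n}}(\mu)$. As a sanity check, one can specialize $k=\ell=1$, in which case $H(1,1;n)$ consists of the hook shapes $(n-i,1^i)$, one verifies that $s_{1,1}(n-i,1^i)=1$, and the identity reduces to Proposition~\ref{main.1}, recovering the phenomenon that motivated the paper.
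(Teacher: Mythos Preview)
Your argument is correct and matches the paper's own treatment, which likewise presents Theorem~\ref{identity.1} as an immediate consequence of Theorems~\ref{super.2} and~\ref{main.3}. One small correction to your closing sanity check: for $k=\ell=1$ one has $s_{1,1}(n-i,1^i)=2$ (not $1$), so the identity reads $2\sum_i\chi^{(n-i,1^i)}(\mu)=\prod_j(1+(-1)^{\mu_j+1})$, and dividing by $2$ recovers Proposition~\ref{main.1}.
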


\section{Special cases}
\subsection{The case  $\ell=0$}
When $\ell=0$, $\f^*_{(k,0),n}=\f_{k,n}$ and Theorem~\ref{main.3} implies that
$\chi_{\f_{k,n}(\mu)}=k^{\ell(\mu)}$, which is a classical (known) result.
\subsection{The case $k=\ell=1$: the proof of Proposition~\ref{main.1}}

When $k=\ell=1$, by~\cite[Theorem 6.24]{berele.regev} we get  $s_{1,1}(\lm)=2$ for any $\lm\in H(1,1;n)$.
Note that $H(1,1;n)=\{(n-i,1^i)\mid i=0,1,\ldots,n-1\}$. Let $\mu=(\mu_1,\ldots,\mu_r)\vdash n$ with $\mu_r>0$, then by Theorem~\ref{identity.1}
$$
\sum_{i=0}^{n-1} 2\cdot\chi^{(n-i,1^i)}(\mu)=\prod_{j=1}^r(1+(-1)^{\mu_j+1})
$$

1. ~If some $\mu_j$ is even then the right hand side is zero, hence $\sum_{i=0}^{n-1} \chi^{(n-i,1^i)}(\mu)=0.$

\medskip
2. ~And if all $\mu_j$s are odd then  the right hand side $=2^{\ell(\mu)}$, namely $\sum_{i=0}^{n-1} \chi^{(n-i,1^i)}(\mu)=2^{\ell(\mu)-1}$. 

\medskip
This completes the proof of Proposition~\ref{main.1}.

\subsection{The case $k=2$ and $\ell=1$}
Here we can prove
\begin{cor}\label{identity.2.1}
Recall that $H'(2,1;n)\subseteq H(2,1;n)$ denote the partitions $\lm=(\lm_1,\lm_2,\ldots)\in H(2,1;n)$ with $\lm_2>0$,
and let  $\mu=(\mu_1,\ldots,\mu_r)\vdash n$ with $\mu_r>0$. Then
$$
\sum_{\lm\in H'(2,1;n)}(\lm_1-\lm_2+1)\cdot \chi^\lm(\mu)=
\frac{1}{4}\left(\prod_{j=1}^r\big(2+(-1)^{\mu_j+1}\big)-(2n+1) \right)
$$
\end{cor}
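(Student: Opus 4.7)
The plan is to specialize Theorem~\ref{identity.1} to $k=2$, $\ell=1$ and then solve for the subsum over $H'(2,1;n)$ by computing the coefficients $s_{2,1}(\lm)$ explicitly for the two relevant classes of $\lm$.

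First I would observe that $H(2,1;n)\setminus H'(2,1;n)$ contains only the one-row partition $\lm=(n)$, since any $\lm\in H(2,1;n)$ with $\lm_2=0$ is forced to be $(n)$. Combined with $\chi^{(n)}(\mu)=1$, Theorem~\ref{identity.1} becomes
$$s_{2,1}((n))+\sum_{\lm\in H'(2,1;n)}s_{2,1}(\lm)\,\chi^\lm(\mu)=\prod_{j=1}^{r}\bigl(2+(-1)^{\mu_j+1}\bigr).$$
A direct enumeration gives $s_{2,1}((n))=2n+1$: a $(2,1)$-semi-standard filling of the row $(n)$ is weakly increasing in the ordered alphabet $1<2<1'$ and uses $1'$ at most once (primed letters are strict in rows), so it is parametrised by $(a,b,c)$ with $a+b+c=n$ and $c\in\{0,1\}$.

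The main step, and the only real obstacle, will be to establish
$$s_{2,1}(\lm)=4(\lm_1-\lm_2+1)\qquad\text{for every }\lm\in H'(2,1;n).$$
Writing $\lm=(\lm_1,\lm_2,1^{m})$ with $\lm_1\ge\lm_2\ge 1$, I would decompose any $(2,1)$-semi-standard tableau of shape $\lm$ into its Latin part (cells holding $1$ or $2$) and its primed part (cells holding $1'$). The Latin cells occupy a sub-Young-diagram $\nu\subseteq\lm$; the constraint that at most one $1'$ lies in each row forces $\nu_i\in\{\lm_i-1,\lm_i\}$, while the availability of only two Latin values (strict in columns) forces $\nu$ to sit in the first two rows of $\lm$. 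There are then four sub-cases for $\nu=(\nu_1,\nu_2)$, each admitting $\nu_1-\nu_2+1$ ordinary $\{1,2\}$-semi-standard fillings, and summing gives exactly $4(\lm_1-\lm_2+1)$ (the same value is available from the explicit formulas of~\cite[Section~6]{berele.regev}). Substituting both coefficients back, transferring $s_{2,1}((n))=2n+1$ to the right-hand side and dividing by $4$ then yields the corollary.
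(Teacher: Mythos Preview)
Your proof is correct and follows exactly the paper's route: specialize Theorem~\ref{identity.1} to $k=2$, $\ell=1$, separate off the single partition $(n)\in H(2,1;n)\setminus H'(2,1;n)$, and feed in the values $s_{2,1}((n))=2n+1$ and $s_{2,1}(\lm)=4(\lm_1-\lm_2+1)$. The only difference is cosmetic: the paper simply cites~\cite[Definition~2.1 and Theorem~6.24]{berele.regev} for those two values, whereas you supply short direct enumerations (your four-subcase count for $\nu$ even handles the boundary $\lm_1=\lm_2$ correctly, since the ``illegal'' subcase contributes $\nu_1-\nu_2+1=0$).
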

\begin{proof}
The only $\lm\in H(2,1;n)$ and $\lm\not \in H'(2,1;n)$ is $\lm=(n)$, and in that case it follows from~\cite[Definition 2.1]{berele.regev}
that $s_{2,1}(n)=2n+1$. When $\lm\in H'(2,1;n)$,~\cite[Theorem 6.24]{berele.regev} implies that $s_{2,1}(\lm)=4(\lm_1-\lm_2+1)$.  The proof now follows by applying Theorem~\ref{identity.1}.

\end{proof}

\end{document}